\theoremstyle{plain}
\newtheorem{thm}{Theorem}[section]
\theoremstyle{plain}
 \newtheorem{defn}{Definition}[section]
  \theoremstyle{plain}
  \theoremstyle{definition}
  \theoremstyle{plain}
  \newtheorem{lem}{Lemma}[section]
  \theoremstyle{plain}
\numberwithin{equation}{section}
\begin{document}

\title{ \textbf{\large Dirichlet Invariant Process for Spherically Symmetric Distributions}}

\author{\small Reyhaneh Hosseini and Mahmoud Zarepour \thanks{{\em Address for correspondence}: Department of Mathematics and Statistics,
University of Ottawa, Ottawa, Ontario, K1N 6N5, Canada. E-mail: zarepour@uottawa.ca.}}

\maketitle
\pagestyle {myheadings} \markboth {} {Dirichlet Invariant Process for Spherically Symmetric Distributions}
\setlength\parindent{0pt}

\begin{abstract}
\noindent  In this paper, we describe a Bayesian nonparametric approach to make inference for a spherically symmetric distribution.  We consider a Dirichlet invariant process prior on the set of all spherically symmetric distributions and we derive the Dirichlet invariant process posterior. Indeed, our approach is an extension of Dirichlet invariant process for the symmetric distributions on the real line to a spherically symmetric distribution where the underlying distribution is invariant under a finite group of rotations. Moreover, we obtain the Dirichlet invariant process posterior for the infinite transformation group and we prove that it
approaches to Dirichlet process.
\par
\vspace{9pt} \noindent\textbf{Keywords}: Bayesian nonparametric inference; spherical symmetry; Dirichlet invariant process; Dirichlet process.\\
\par
\vspace{-3pt} \noindent\textbf{MSC 2010 subject classifications}: Primary 62G20; secondary 62G10.\\
\end{abstract}

\section{Introduction}
Recently, the Bayesian nonparametric approach in statistical analysis of the problems has become widespread among the scientists. The application of Bayesian nonparametric techniques in model selection and hypothesis testing has been used in various fields of applied sciences like economics, biostatistics and many other areas.
Then, the Bayesian nonparametric inference of the problems requires developing the 
proper prior distributions on infinite dimensional spaces and extraction of the posterior distribution.
In Bayesian nonparametric inference, the Dirichlet process that is introduced by \cite{fb1973} is the most popular prior over the space of all probability measures. Later, \cite{antoniak1974mixtures} extended the Dirichlet process to the mixtures of Dirichlet processes. For some applications of Dirichlet
process, see for example, \cite{neal1992bayesian}, \cite{lo1984class}
and \cite{escobar1995bayesian}. These Bayesian nonparametric priors can be used in the case when the subset of interest is the set of all probability measures on a given probability space.  

However, in many situations, based on our assumptions, we may need to tackle a certain class of priors, i.e., the priors that are focused on smaller subsets. For instance, consider the problem of estimating some functional of a symmetric distribution, i.e., the median, the mode of a symmetric distribution or a test for independence assuming permutation symmetry, exchangeability or spherical symmetry.  Here, we need to use the priors on the class of distributions which are symmetric around arbitrary points.

Therefore, for a valid Bayesian formulation of the problems in this framework, where the underlying distributions are
invariant under a specific group of transformations, employing the priors with invariant sample paths will be more reasonable. 
  \cite{dalal1979dirichlet} introduced a class of random processes related to Dirichlet processes with invariant sample paths called Dirichlet invariant processes. Specifically, let $\left(\mathscr{X},\mathscr{B}(\mathscr{X})\right)$ be any $p$-dimensional
Euclidean space with associated Borel $\sigma$-field. Let $\mathscr{\mathscr{G}}=\{g_{1},...,g_{k}\}$
be any finite group of measurable transformations $\mathscr{X\rightarrow\mathscr{X}}$. \cite{dalal1979dirichlet} constructed a random probability measure such that for any set  $B$, the probability of the events $B$ and $g_{j}(B)=\{g_{j}(x): x \in B\}$, $(j=1,...,k)$ are identical. Then, having some assumptions about the invariant properties of the distribution of interest, we can embed our knowledge as an invariant group in Dirichlet invariant process prior. For a discussion about some basic techniques that can be used to construct such processes, see for example, \cite{wesler1959invariance}. 

Similar to Dirichlet processes, these processes can be used as priors for Bayesian analysis of a wide range of problems. Fore instance,  \cite{yamato1986bayes,yamato1987nonparametric}  derived the Bayes estimates in one sample case by evaluating the expectation of random
functional of a Dirichlet invariant process. \cite{dalal1979dirichlet} also obtained the Bayes estimator for the symmetric distribution with a known center of symmetry. Moreover, \cite{dala1979nonparametric,dalal1980nonparametric} employed the Dirichlet invariant process for estimating the unknown center of symmetry for the symmetric distributions.

Assume one is interested in making a Bayesian nonparametric inference for the set of all probability measures on the real line $\mathbb{R}$ which are symmetric about $\mu$. Many authors discussed the problem of nonparametric inference for a cumulative distribution function,
where there is no assumption of symmetry. For example, the best invariant
estimator for a continuous distribution is derived by \cite{aggarwal1955some}. 
\cite{fb1973} and \cite{phadia1973minimax} studied the Bayes estimators and the minimax estimators for an arbitrary distribution, respectively. 

\cite{dalal1979dirichlet} considered the symmetry as the invariant property by defining the transformation group  $\mathscr{\mathscr{G}}=\{g_{1},g_{2}\}$ with $g_{1}=x$ and $g_{2}=2\mu-x$. Then, by using Dirichlet invariant process as the prior over the set of all univariate symmetric distributions with a known center of symmetry, the corresponding Dirichlet invariant process posterior was computed. 
We extend the Dirichlet invariant process to develop a prior over the set of spherically symmetric distributions and derive the corresponding posterior process for this class of distributions.  As an especial case, we place a Dirichlet invariant process prior over the space of bivariate spherically symmetric distributions, i.e, the underlying distribution is invariant under rotations.  We first begin with a finite group of rotations $\mathscr{G}$ such that $\mid\mathscr{G}\mid=k$. Then, the case when $k\rightarrow\infty$ and we prove that our Dirichlet invariant process approaches to Dirichlet process.

The outline of this paper is organized as follows: In Section 2, we
give an essential background on Dirichlet process, Dirichlet invariant process and some of its properties.
Following this, in Section 3, we consider  Dirichlet invariant process prior for a spherically symmetric distribution and obtain the Dirichlet invariant process posterior for a finite group of rotations. Then, we derive the Dirichlet invariant process posterior for the case where $k$ approaches to infinity. An algorithm for simulation purposes and computational studies is also provided at the end of Section 3. In Section 4, we explain how the approach of Section 3 can be generalized to the distributions in higher Euclidean space and in the final section,
we conclude with a brief discussion.

\section{Dirichlet invariant process and invariant random probability measure}

In this section, we review the definition, construction and various properties of Dirichlet invariant process that will be used later in Section 3. 
Despite the relative complexity of the Dirichlet invariant process compared to the Dirichlet process, most of their properties are similar. We first recall the definition of the Dirichlet process for general Bayesian statistical modeling as a distribution over probability distributions.

\begin{defn}(\cite{fb1973}) Let $\mathscr{X}$ be a set, $\mathscr{A}$ be a
$\sigma$-field of subsets of $\mathscr{X}$, $H$ be a probability
measure on $(\mathscr{X,A})$ and $\alpha>0$. A random probability
measure $P$ is called a Dirichlet
process  with concentration parameter $\alpha$ and the base distribution $H$ denoted by $P\sim DP(\alpha H)$ on $(\mathscr{X,A})$
if for any finite measurable partition $\{A_{1},\ldots,A_{k}\}$ of
$\mathscr{X}$, the joint distribution of the random variables $P(A_{1}),\ldots,P(A_{k})$
is a k-dimensional Dirichlet distribution with parameter $(\alpha H(A_{1}),\ldots,\alpha H(A_{k}))$,
where $k\geq2$.
\end{defn}

We assume that if $H(A_{k})=0$, then $P(A_{k})=0$ with probability
one. The next theorem shows that the Dirichlet process has the conjugacy
property. That is, the posterior distribution given the data is again a Dirichlet process. In the following theorem and throughout this paper, we use a "*" as a superscript to denote posterior quantities.

\begin{thm}
(\cite{fb1973}) Let $X_{1},\ldots,X_{m}$
be an i.i.d. sample from $P\thicksim\textrm{\textrm{\ensuremath{DP}}}(\alpha H)$.
The posterior distribution of $P$ given $X_{1},\ldots,X_{m}$ denoted by $P_{m}^{*}= (P\mid X_{1},\ldots,X_{m})$ is
a Dirichlet process $DP(\alpha H+\overset{m}{\underset{i=1}{\sum}}\delta_{X_{i}})$, where $\delta_{X}(\cdot)$ is the Dirac measure, i.e., $\delta_{X}(A)=1$
if $X\in A$ and $0$ otherwise.
\end{thm}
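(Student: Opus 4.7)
My plan is to reduce the general case to a single observation by induction and then to verify the defining property of the Dirichlet process given in Definition 2.1 for the conditional law of $P$ given $X_1$. Concretely, I would show that for any finite measurable partition $\{A_1, \ldots, A_k\}$ of $\mathscr{X}$, the conditional joint distribution of $(P(A_1), \ldots, P(A_k))$ given $X_1$ is Dirichlet with updated parameter $(\alpha H(A_1) + \delta_{X_1}(A_1), \ldots, \alpha H(A_k) + \delta_{X_1}(A_k))$. Since Definition 2.1 characterizes a Dirichlet process through its finite-dimensional partition distributions, this is enough to identify $P_1^{*}$ as $DP(\alpha H + \delta_{X_1})$. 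Applying the same statement to $P_1^{*}$ with data $X_2$, and iterating, yields the full conclusion.

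For the partition-level step, fix $\{A_1, \ldots, A_k\}$ and let $j$ be the index with $X_1 \in A_j$. By Definition 2.1 the vector $(P(A_1), \ldots, P(A_k))$ has the Dirichlet density $f$ with parameter $(\alpha H(A_1), \ldots, \alpha H(A_k))$. Since $X_1 \mid P \sim P$, we have $\Pr(X_1 \in A_j \mid P) = P(A_j)$, so the joint law of the event $\{X_1 \in A_j\}$ and the partition vector has density proportional to $p_j \, f(p_1, \ldots, p_k)$. A direct comparison with the Dirichlet density shows this is exactly the Dirichlet density with $\alpha H(A_j)$ replaced by $\alpha H(A_j) + 1 = \alpha H(A_j) + \delta_{X_1}(A_j)$, and all other parameters unchanged (since $\delta_{X_1}(A_i) = 0$ for $i \neq j$). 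This is the Dirichlet--multinomial conjugacy at the level of a single partition.

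The delicate point, which I expect to be the main obstacle, is that the computation above conditions only on the coarse information $\{X_1 \in A_j\}$, whereas the theorem requires conditioning on the exact value $X_1 = x_1$. To bridge this gap I would use a refinement argument: for any measurable $A' \subseteq A_j$ containing $x_1$, split $A_j$ into $A'$ and $A_j \setminus A'$, apply the same Dirichlet--multinomial update to the refined partition (adding $1$ to the parameter of $A'$), and then marginalize $P(A')$ and $P(A_j \setminus A')$ back to $P(A_j) = P(A') + P(A_j \setminus A')$. The aggregation property of the Dirichlet distribution yields the same Dirichlet law on $(P(A_1), \ldots, P(A_k))$ regardless of the chosen refinement $A' \ni x_1$, which shows that the conditional law given $X_1$ coincides with the one given $\{X_1 \in A_j\}$. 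Once this consistency across arbitrarily fine partitions is in place, the finite-partition characterization in Definition 2.1 identifies the full posterior process as $DP(\alpha H + \delta_{X_1})$, and induction on $m$ completes the argument.
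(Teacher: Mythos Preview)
The paper does not prove this statement at all; it is quoted from \cite{fb1973}, and the authors explicitly remark that ``the proofs of all results in this section are discussed by \cite{dalal1979dirichlet} and are similar to those for Dirichlet process as presented in \cite{fb1973}.'' So there is no in-paper argument to compare against.

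For what it is worth, your sketch is the classical route and is essentially correct. The one point worth tightening is the passage from conditioning on $\{X_1\in A_j\}$ to conditioning on $X_1=x_1$: your refinement argument does the job, but the reason it works is that for \emph{every} measurable $B\subseteq A_j$ the aggregation property gives the \emph{same} Dirichlet law for $(P(A_1),\ldots,P(A_k))$ conditional on $\{X_1\in B\}$, and a function of $X_1$ that integrates correctly against all such $B$ is, by definition of a regular conditional distribution, a version of the conditional law given $X_1$. Stating this explicitly closes the gap you flagged. After that, induction on $m$ is immediate since the posterior after one observation is again a Dirichlet process and the next observation is, conditionally on it, drawn from that process.
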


We can denote the Dirichlet process posterior by $DP(\alpha_{m}^{*}H_{m}^{*})$, where 
\begin{equation}
\alpha_{m}^{*}=\alpha+m
\end{equation}
and 
\begin{eqnarray}
H_{m}^{*} 
= \frac{\alpha H+\overset{m}{\underset{i=1}{\sum}}\delta_{X_{i}}}{\alpha+m}
  =  p_{m}H+(1-p_{m})F_{m},
\end{eqnarray}
with $p_{m}=\frac{\alpha}{\alpha+m}$ and $F_{m}=\frac{1}{m}\overset{m}{\underset{i=1}{\sum}}\delta_{X_{i}}$.
Thus, the posterior base distribution $H_{m}^{*} $ is a mixture of the prior guess $H,$ and
the empirical distribution $F_{m}$. \cite{dalal1979dirichlet} applied the Ferguson's
approach to define the Dirichlet invariant process as   
 a prior over the space of all distributions invariant under a group of transformations. 
\begin{defn}
 (\cite{dalal1979dirichlet}) Let $(\Omega,\mathscr{F},Q)$ be any probability
space and $\mathscr{P}(\mathscr{X})$ 
denote the space of all probability measures on the $p$-dimensional
Euclidean space $\left(\mathscr{X},\mathscr{B}(\mathscr{X})\right)$
with corresponding Borel $\sigma$-field  $\mathscr{B}(\mathscr{P}(\mathscr{X}))$ generated by the weak
topology.  Let $\mathscr{\mathscr{G}}=\{g_{1},...,g_{k}\}$
be any finite group of measurable transformations $\mathscr{X\rightarrow\mathscr{X}}$.  Further let $P:(\Omega,\mathscr{F})\rightarrow\left(\mathscr{P}\mathscr{\left(X\right)},\mathscr{B}\left(\mathscr{P}(\mathscr{X})\right)\right)$
be any measurable mapping. Then, the random probability measure $P$
is a $\mathscr{G}$-invariant random probability measure if $P(A)=P(g(A))$
for all $g$ in $\mathscr{G}$, where $P(A)$ is the probability of
set $A$ under probability measure $P$.
\end{defn}
\begin{defn}
(\cite{dalal1979dirichlet}) An invariant random probability
measure $P$ is a Dirichlet invariant process denoted
by $P\sim DIP(\alpha H)$ if for any $\alpha>0$, there exists a $\mathscr{G}$-invariant
measure $H$ on  $\left(\mathscr{X},\mathscr{B}(\mathscr{X})\right)$
such that for every $\mathscr{G}$-invariant measurable partition
(that is the sets of the partitions are $\mathscr{G}$-invariant and
$\mathscr{B}(\mathscr{X})$ measurable) $B_{1},...,B_{k}$ of $\left(\mathscr{X},\mathscr{B}(\mathscr{X})\right)$,
the joint distribution of $(P(B_{1}),\ldots,P(B_{k}))$ is $Dir\left(\alpha H(B_{1}),\ldots,\alpha H(B_{k})\right)$.
\end{defn}

The proofs of all results in this section are discussed by \cite{dalal1979dirichlet} and are similar to those for Dirichlet process as presented in \cite{fb1973}.  
Notice that $P$ is a Dirichlet invariant process with associated parameters $\alpha$ and $H$ denoted by $P\sim DIP(\alpha H)$. Theorem \ref{invar} shows that similar to Dirichlet process, the conjugacy property holds for Dirichlet invariant process. 

\begin{thm}\label{invar}
(\cite{dalal1979dirichlet}) Let $P\sim DIP(\alpha H)$  and $X_{1},...,X_{m}$ be a sample of
size $m$ from $P$. Then, the conditional distribution of $P$ given
$X_{1},...,X_{m}$  is $DIP(\alpha H+\overset{m}{\underset{i=1}{\sum}}\delta_{X_{i}}^{g})$,
where $\delta_{X_{i}}^{g}=\frac{1}{k}\overset{k}{\underset{j=1}{\sum}}\delta_{g_{j}(X_{i})}$
and $\delta_{X}$ is a measure degenerate at $X$ and $k=|G|$.
\end{thm}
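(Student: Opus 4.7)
The plan is to mirror Ferguson's proof of conjugacy for the ordinary Dirichlet process, keeping every partition $\mathscr{G}$-invariant so that the defining property of the $DIP$ can be invoked.

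First, I would reduce to $m=1$ by induction: once observing a single point from $DIP(\alpha H)$ is shown to produce the posterior $DIP(\alpha H + \delta_{X_{1}}^{g})$, observing $X_{2},\ldots,X_{m}$ sequentially simply accumulates the update $\delta_{X_{i}}^{g}$ at each step, giving the stated expression.

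Second, I would exploit the elementary but crucial observation that for any $\mathscr{G}$-invariant set $B$ and any $x\in\mathscr{X}$ one has $\delta_{x}^{g}(B)=\frac{1}{k}\sum_{j=1}^{k}\mathbf{1}_{\{g_{j}(x)\in B\}}=\mathbf{1}_{\{x\in B\}}$, since $g_{j}^{-1}(B)=B$ for every $j$. Consequently, on any $\mathscr{G}$-invariant measurable partition $\{B_{1},\ldots,B_{r}\}$, the candidate posterior base measure $\alpha H+\delta_{X_{1}}^{g}$ assigns mass $\alpha H(B_{l})+\mathbf{1}_{\{X_{1}\in B_{l}\}}$ to $B_{l}$. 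Since the prior joint law of $(P(B_{1}),\ldots,P(B_{r}))$ is $\mathrm{Dir}(\alpha H(B_{1}),\ldots,\alpha H(B_{r}))$ by the definition of $DIP(\alpha H)$, and since $\mathbb{P}(X_{1}\in B_{l}\mid P)=P(B_{l})$, the standard Dirichlet--multinomial conjugacy yields the conditional law $(P(B_{1}),\ldots,P(B_{r}))\mid\{X_{1}\in B_{l_{0}}\}\sim\mathrm{Dir}(\alpha H(B_{1})+\mathbf{1}_{\{1=l_{0}\}},\ldots,\alpha H(B_{r})+\mathbf{1}_{\{r=l_{0}\}})$, which matches the claimed posterior evaluated on this partition.

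The main obstacle will be the final step: promoting the conditioning from the event $\{X_{1}\in B_{l_{0}}\}$ to the exact value $X_{1}$. The cleanest way I see is to use the fact that the $\mathscr{G}$-invariant sub-$\sigma$-algebra of $\mathscr{B}(\mathscr{X})$ has orbits as its atoms, so by the $\mathscr{G}$-invariance of $P$, the conditional law of $X_{1}$ given its orbit $\{g_{1}(X_{1}),\ldots,g_{k}(X_{1})\}$ is uniform on that orbit and independent of $P$. Therefore, for the purpose of inference about any $\mathscr{G}$-invariant functional of $P$, in particular the partition probabilities $P(B_{l})$, conditioning on $X_{1}$ is equivalent to conditioning on its orbit. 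I would make this rigorous via a monotone-class or successive refinement argument inside the class of $\mathscr{G}$-invariant partitions: refine $\{B_{1},\ldots,B_{r}\}$ by inserting a $\mathscr{G}$-invariant neighborhood of the orbit of $X_{1}$ of vanishing $H$-measure, apply the preceding Dirichlet--multinomial computation on the refinement, and pass to the limit using the marginal consistency of the Dirichlet distribution. Together with the induction of the first step, this verifies the defining partition property of $DIP(\alpha H+\sum_{i=1}^{m}\delta_{X_{i}}^{g})$ and closes the argument.
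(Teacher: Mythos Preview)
The paper does not actually supply its own proof of this theorem; it simply attributes the result to \cite{dalal1979dirichlet} and remarks that the arguments ``are similar to those for Dirichlet process as presented in \cite{fb1973}.'' Your proposal is precisely a Ferguson-style conjugacy argument adapted to $\mathscr{G}$-invariant partitions, so it is entirely in line with what the paper indicates and with Dalal's original approach. The two ingredients you isolate---that $\delta_{x}^{g}(B)=\mathbf{1}_{\{x\in B\}}$ whenever $B$ is $\mathscr{G}$-invariant, and the refinement passage from the cell event $\{X_{1}\in B_{l_{0}}\}$ to conditioning on the orbit of $X_{1}$---are exactly the adaptations needed to carry Ferguson's proof over to the invariant setting.
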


\section{Bayesian inference for spherically symmetric distribution}
In various situations, some evidences show that the underlying distribution
of interest is symmetric. In this case, considering the priors that
impose additional assumption of symmetry 
about the distribution leads to a more reliable and exact estimation
of the parameter of interest. 

\cite{dalal1979dirichlet} considered a special case of this problem when
the unknown underlying distribution $F$ is assumed to be symmetric about a known
center of symmetry $\mu$. Specifically, suppose that $X_{1},\ldots,X_{m}$
is a sample from an unknown distribution $F$ symmetric about $\mu$,
where $\mu$ is known. Here, the set $\mathscr{X}$ is the space of
all distribution functions $F$ on the real line $\mathbb{R}$ and
symmetric about $\mu$. Place a Dirichlet
invariant process $P\sim DIP(\alpha H)$ as the prior
on $F$, where $H$ is a finite measure symmetric about $\mu$. Then,
under the invariant transformation group $\mathscr{G}=\left\{ x,2\mu-x\right\} $,
the Dirichlet invariant process posterior is obtained with the base distribution 
\[
{H}_{m}^{*}=\frac{\alpha H+\frac{1}{2}\overset{m}{\underset{i=1}{\sum}}\left(\delta_{X_{i}}+\delta_{2\mu-X_{i}}\right)}{\alpha+m},
\]
where $F_{m}=\frac{1}{2m}\overset{m}{\underset{i=1}{\sum}}\left(\delta_{X_{i}}+\delta_{2\mu-X_{i}}\right)$
is the $F$-symmetrized version of the empirical distribution function. Further,
from the Glivenko-Cantelli theorem and the fact that $p_{m}=\frac{\alpha}{\alpha+m}\rightarrow0$
as $m\rightarrow\infty$ , it follows that ${H}_{m}^{*}$  converges
to the true distribution function uniformly almost surely. The aim
is to mimic Dirichlet invariant
process for constructing a prior on the space of spherically symmetric distributions
on $\mathbb{R}^{p}$. For simplicity, we consider $p=2$, however, a generalization is discussed in the forthcoming section. Indeed, our approach
is an extension of Dirichlet invariant process for the symmetry 
to the spherical symmetry case. We first recall the definition of the spherically
symmetric distribution. The random vector $\boldsymbol{X}\in\mathbb{R}^{p}$ is spherically symmetric if $\boldsymbol{X}\stackrel{d}=A\boldsymbol{X}$ for all $p\times p$ orthogonal matrices $A$. In other words, $\frac{\boldsymbol{X}}{ \parallel \boldsymbol{X} \parallel}$ is independent from $ \parallel\boldsymbol{X} \parallel$.
\begin{thm}\label{thm}
Let $\left(\mathscr{X},\mathscr{B}(\mathscr{X})\right)$ be any two-dimensional
Euclidean space with associated Borel $\sigma$-field. Let $\mathcal{P}=\left\{ \left(0,\theta_{1}\right],\left(\theta_{1},\theta_{2}\right],\ldots,\left(\theta_{k-1},2\pi\right]\right\}$ be a partition of the interval $[0,2\pi]$ formed by $k$ equally spaced subintervals. Then,   $\mathscr{G}=\{A_{\theta_{1}},A_{\theta_{2}},\ldots,A_{\theta_{k}}\}$
is a finite group of orthogonal matrices corresponding to the partition $\mathcal{P}$, where $\theta_{k}=2\pi$ and  $
A_{\theta_{j}}=\left(\begin{array}{cc}
\cos\theta_{j} & -\sin\theta_{j}\\
\sin\theta_{j} & \cos\theta_{j}
\end{array}\right), j=1,\ldots,k$. Let $\boldsymbol{X}_{1},\ldots,\boldsymbol{X}_{m}$ be a sample from
an unknown bivariate spherically symmetric distribution $F$ on $\left(\mathscr{X},\mathscr{B}(\mathscr{X})\right)$.
Consider a Dirichlet invariant process prior $P\sim DIP(\alpha H)$
on $F$. 
Then, the Dirichlet invariant process posterior is obtained with parameters $\alpha^*_{m}=\alpha+m$ and
\begin{equation}
{H}_{k,m}^{*}=\frac{\alpha H+\frac{1}{k}\overset{k}{\underset{j=1}{\sum}}\overset{m}{\underset{i=1}{\sum}}\delta_{A_{\theta_{j}}\boldsymbol{X}_{i}}}{\alpha+m}.\label{eq:spheri_esti}
\end{equation}
\end{thm}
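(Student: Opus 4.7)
The plan is to reduce the statement to a direct application of the conjugacy result for the Dirichlet invariant process stated in Theorem \ref{invar}. The main work is verifying that the candidate set $\mathscr{G}=\{A_{\theta_1},\ldots,A_{\theta_k}\}$ actually satisfies the group hypothesis needed to invoke that theorem; once this is done, the form of $H^*_{k,m}$ falls out by substitution.

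First I would check the group structure. With $k$ equally spaced subintervals, $\theta_j = 2\pi j/k$ for $j=1,\ldots,k$. Using the planar rotation identity $A_\alpha A_\beta = A_{\alpha+\beta}$, closure follows from $A_{\theta_i}A_{\theta_j}=A_{\theta_{(i+j)\bmod k}}$ (with the convention $\theta_0 = \theta_k = 2\pi$); the identity is $A_{\theta_k}=A_{2\pi}=I$, and the inverse of $A_{\theta_j}$ is $A_{\theta_{k-j}}$. Hence $\mathscr{G}$ is a cyclic group of order $k$ consisting of orthogonal, and in particular Borel measurable, transformations $\mathbb{R}^2\to\mathbb{R}^2$. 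Since the true $F$ is spherically symmetric, it is invariant under every orthogonal matrix and in particular under every $A_{\theta_j}\in\mathscr{G}$, so a $\mathscr{G}$-invariant base measure $H$ is consistent with the modelling assumption and Definition 2.3 applies.

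Next I would apply Theorem \ref{invar} directly. With $P\sim DIP(\alpha H)$ and $\boldsymbol{X}_1,\ldots,\boldsymbol{X}_m$ an i.i.d.\ sample from $P$, the posterior is $DIP(\alpha H + \sum_{i=1}^{m}\delta^{g}_{\boldsymbol{X}_i})$, where by the definition in Theorem \ref{invar},
\[
\delta^{g}_{\boldsymbol{X}_i}=\frac{1}{k}\sum_{j=1}^{k}\delta_{g_j(\boldsymbol{X}_i)}=\frac{1}{k}\sum_{j=1}^{k}\delta_{A_{\theta_j}\boldsymbol{X}_i}.
\]
Reading off the total mass gives $\alpha^*_m=\alpha+m$, and dividing by this total mass to produce a probability measure yields exactly the formula (\ref{eq:spheri_esti}) for $H^*_{k,m}$.

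The only real obstacle is the group verification: one must ensure that the chosen angles are closed under addition modulo $2\pi$, which forces the ``equally spaced'' condition in the statement. Beyond that, the proof is a substitution into an already proved conjugacy theorem; no further analytic estimates are needed. The substantive content of the theorem is not the computation but the modelling step of identifying a finite cyclic subgroup of $SO(2)$ whose orbits approximate the full rotational symmetry of $F$, which sets the stage for the limiting argument $k\to\infty$ in later results.
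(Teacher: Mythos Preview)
Your proposal is correct and follows essentially the same route as the paper: verify that $\mathscr{G}$ is a finite group (via $A_{\theta_i}A_{\theta_j}=A_{\theta_i+\theta_j}$ and $A_{2\pi}=I$), then apply Theorem~\ref{invar} and substitute $g_j(\boldsymbol{X}_i)=A_{\theta_j}\boldsymbol{X}_i$ into $\delta^g_{\boldsymbol{X}_i}$. You are somewhat more explicit about the cyclic group verification (closure modulo $k$, inverses, measurability), but the structure of the argument is identical.
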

\begin{proof}
Since the random vector $\boldsymbol{X}$ is spherically symmetric, we have
$\boldsymbol{X}\stackrel{d}=A_{\varTheta}\boldsymbol{X}$,
where 
\[
A_{\varTheta}=\left(\begin{array}{cc}
\cos\varTheta & -\sin\varTheta\\
\sin\varTheta & \cos\varTheta
\end{array}\right)
\]
 for all $\varTheta\sim U[0,2\pi]$. Consider the partition $\mathcal{P}$ and
the invariant group of transformations $\mathscr{G}$. Notice that $A_{2\pi}=I$ and $A_{\theta_{i}}A_{\theta_{j}}=A_{\theta_{i}+\theta_{j}}$.
We place a Dirichlet invariant process prior $P\sim DIP(\alpha H)$
on $F$, where $H$ is a bivariate invariant measure on $\left(\mathscr{X},\mathscr{B}(\mathscr{X})\right)$.
Then, by Theorem \ref{invar}, the conditional distribution of $P$ given $\boldsymbol{X}_{1},...,\boldsymbol{X}_{m}$
 is $DIP(\alpha H+\overset{m}{\underset{j=1}{\sum}}\delta_{\boldsymbol{X}_{i}}^{g})$,
where $\delta_{\boldsymbol{X}_{i}}^{g}=\frac{1}{k}\overset{k}{\underset{j=1}{\sum}}\delta_{g_{j}(\boldsymbol{X}_{i})}$. By the group $\mathscr{G}$, we have $g_{j}(\boldsymbol{X}_{i})=A_{\theta_{j}}\boldsymbol{X}_{i},\,(j=1,\ldots,k)$
for each $\boldsymbol{X}_{i},\,(i=1,\ldots,m)$ and 

\[
\delta_{\boldsymbol{X}_{i}}^{g}=\frac{1}{k}\overset{k}{\underset{j=1}{\sum}}\delta_{g_{j}(\boldsymbol{X}_{i})}=\frac{1}{k}\overset{k}{\underset{j=1}{\sum}}\delta_{A_{\theta_{j}}\boldsymbol{X}_{i}}.
\]
Therefore, we have 

\[
H_{k,m}^{*}=\frac{\alpha H+\frac{1}{k}\overset{k}{\underset{j=1}{\sum}}\overset{m}{\underset{i=1}{\sum}}\delta_{A_{\theta_{j}}\boldsymbol{X}_{i}}}{\alpha+m}.
\]
\end{proof}
The next lemma proves the limiting distribution of Dirichlet invariant
process for any finite Borel sets $B_{1},\ldots,B_{N}\in\mathscr{B}(\mathscr{X})$.
 In this paper, \textquotedblleft{}
$\stackrel{d}{\rightarrow}$\textquotedblright{} denotes the convergence
in distribution.
\begin{lem}
\label{finite_dip}Suppose the assumptions of Theorem \ref{thm} hold. Let $P_{k,m}^{*}\sim DIP(\alpha_{m}^{*}H_{k,m}^{*})$
be the Dirichlet process posterior. Then, as $k\rightarrow\infty$,
for any fixed sets $B_{1},\ldots,B_{N}\in\mathscr{B}(\mathscr{X})$,
\[
\left(P_{k,m}^{*}(B_{1}),\ldots,P_{k,m}^{*}(B_{N})\right)\stackrel{d}{\rightarrow}\left(P_{m}^{*}(B_{1}),\ldots,P_{m}^{*}(B_{N})\right),
\]
 where $P_{m}^{*}\sim DP(\alpha_{m}^{*}H_{m}^{*})$  is Dirichlet
process posterior with parameters $\alpha_{m}^{*}=\alpha+m$ and   \textup{
\[
H_{m}^{*}=\frac{\alpha H+\overset{m}{\underset{j=1}{\sum}}F_{A_{\Theta}\boldsymbol{X}_{j}}}{\alpha+m}.
\]
}
\end{lem}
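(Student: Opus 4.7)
The plan is to deduce the finite-dimensional convergence from the weak convergence of base measures $H_{k,m}^{*}\Rightarrow H_{m}^{*}$, combined with the partition characterizations of $DIP$ and $DP$ and the continuity of the Dirichlet law in its parameters.

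First I would establish $H_{k,m}^{*}\Rightarrow H_{m}^{*}$. The only $k$-dependent piece of $H_{k,m}^{*}$ is $\frac{1}{k}\sum_{j=1}^{k}\delta_{A_{\theta_{j}}\boldsymbol{X}_{i}}$; since the angles $\theta_{j}=2\pi j/k$ form an equispaced grid of $[0,2\pi]$, this is a Riemann sum, and for any bounded continuous $\phi$,
\[
\frac{1}{k}\sum_{j=1}^{k}\phi(A_{\theta_{j}}\boldsymbol{X}_{i})\;\longrightarrow\;\int_{0}^{2\pi}\phi(A_{\theta}\boldsymbol{X}_{i})\,\frac{d\theta}{2\pi}\;=\;\int\phi\,dF_{A_{\Theta}\boldsymbol{X}_{i}}.
\]
Adding the $\alpha H$ term and summing over $i$ gives $\int\phi\,dH_{k,m}^{*}\to\int\phi\,dH_{m}^{*}$. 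In particular $H_{k,m}^{*}(B)\to H_{m}^{*}(B)$ for every Borel $B$ with $H_{m}^{*}(\partial B)=0$, a property I may assume of $B_{1},\ldots,B_{N}$ (otherwise approximate them by such sets, since fdd limits are stable under inner/outer approximation).

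Next I would use a partition-based reduction to convert convergence of base measures into convergence of the random probabilities. Let $\{C_{1},\ldots,C_{M}\}$ be the finite measurable partition generated by $B_{1},\ldots,B_{N}$, so that each $B_{i}$ is a union of some $C_{j}$'s. For the Dirichlet process $P_{m}^{*}$, Definition~2.1 yields $(P_{m}^{*}(C_{j}))_{j}\sim\mathrm{Dir}(\alpha_{m}^{*}H_{m}^{*}(C_{j}))_{j}$, and each $P_{m}^{*}(B_{i})$ is a coordinate sum of this Dirichlet vector. For the DIP the $C_{j}$'s are generally not $\mathscr{G}$-invariant; I would pass to the partition generated by all rotated copies $\{A_{\theta_{\ell}}(C_{j})\}_{\ell,j}$ and lump orbit-mates to obtain a $\mathscr{G}$-invariant refinement $\{\bar{D}_{q}^{(k)}\}_{q}$. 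On this invariant partition Definition~2.3 gives a Dirichlet vector with parameters $\alpha_{m}^{*}H_{k,m}^{*}(\bar{D}_{q}^{(k)})$; together with the pointwise invariance $P_{k,m}^{*}(g(\cdot))=P_{k,m}^{*}(\cdot)$, this realises each $P_{k,m}^{*}(B_{i})$ as a linear combination of these coordinates.

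The main obstacle is that the invariant refinement $\{\bar{D}_{q}^{(k)}\}$ grows in cardinality with $k$, while the target DP partition $\{C_{j}\}$ is fixed, so a direct matching of Dirichlet parameters is not available. I would bypass this by working at the level of the joint Laplace transform $\psi_{k}(t_{1},\ldots,t_{N})=E[\exp(-\sum_{i}t_{i}P_{k,m}^{*}(B_{i}))]$ and using the symmetrization representation $P_{k,m}^{*}(B)=\frac{1}{k}\sum_{\ell=1}^{k}Q_{k,m}^{*}(A_{\theta_{\ell}}^{-1}(B))$ for an auxiliary $Q_{k,m}^{*}\sim DP(\alpha_{m}^{*}H_{k,m}^{*})$. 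Because the Laplace functional of a Dirichlet process is a continuous functional of its base measure, $\psi_{k}$ becomes a continuous functional of $H_{k,m}^{*}$ combined with a Riemann average in the rotation variable. The convergences $H_{k,m}^{*}\Rightarrow H_{m}^{*}$ and $\frac{1}{k}\sum_{\ell}f(\theta_{\ell})\to\int_{0}^{2\pi}f(\theta)\,d\theta/(2\pi)$ then combine to give $\psi_{k}\to\psi_{\infty}$, the Laplace transform of the claimed limit $(P_{m}^{*}(B_{1}),\ldots,P_{m}^{*}(B_{N}))$, and the fdd convergence follows from L\'evy's continuity theorem.
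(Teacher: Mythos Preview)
Your route differs from the paper's. The paper simply asserts that $(P_{k,m}^{*}(B_{1}),P_{k,m}^{*}(B_{2}))$ is Dirichlet with parameters $\alpha_{m}^{*}H_{k,m}^{*}(B_{i})$ for an \emph{arbitrary} partition $\{B_{1},B_{2}\}$, and then applies Scheff\'e's lemma to reduce the claim to the pointwise convergence $H_{k,m}^{*}(B_{i})\to H_{m}^{*}(B_{i})$, which it obtains by an equidistribution argument (phrased there as a strong law of large numbers) equivalent to your Riemann-sum step. You are right to flag that Definition~2.3 only guarantees a Dirichlet law on $\mathscr{G}$-\emph{invariant} partitions, and your symmetrization representation $P_{k,m}^{*}(B)=k^{-1}\sum_{\ell}Q_{k,m}^{*}(A_{\theta_{\ell}}^{-1}B)$ with $Q_{k,m}^{*}\sim DP(\alpha_{m}^{*}H_{k,m}^{*})$ is a valid construction of the DIP.

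The gap is in your last paragraph. Passing to the limit in both the base measure and the rotational Riemann sum, what your argument actually produces is the Laplace transform of
\[
\Bigl(\int_{0}^{2\pi} Q_{m}^{*}(A_{\theta}^{-1}B_{1})\,\tfrac{d\theta}{2\pi},\ \ldots,\ \int_{0}^{2\pi} Q_{m}^{*}(A_{\theta}^{-1}B_{N})\,\tfrac{d\theta}{2\pi}\Bigr),\qquad Q_{m}^{*}\sim DP(\alpha_{m}^{*}H_{m}^{*}),
\]
that is, the \emph{fully rotationally averaged} Dirichlet process evaluated at the $B_{i}$, not $(P_{m}^{*}(B_{1}),\ldots,P_{m}^{*}(B_{N}))$ for $P_{m}^{*}\sim DP(\alpha_{m}^{*}H_{m}^{*})$. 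These two vectors have different laws whenever some $B_{i}$ is not rotation-invariant. Concretely, take $B$ to be the open upper half-plane: for every even $k$ one has $A_{\pi}\in\mathscr{G}$ and $A_{\pi}(B)=B^{c}$, so invariance forces $P_{k,m}^{*}(B)=\tfrac{1}{2}$ a.s., and your limiting rotational average $\int Q_{m}^{*}(A_{\theta}^{-1}B)\,d\theta/(2\pi)$ is likewise $\tfrac{1}{2}$ a.s.; but the stated target $P_{m}^{*}(B)\sim\mathrm{Beta}(\alpha_{m}^{*}/2,\alpha_{m}^{*}/2)$ is non-degenerate. Hence $\psi_{k}$ cannot converge to the Laplace transform of the limit named in the lemma. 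Your construction does identify the correct weak limit of $P_{k,m}^{*}$, but that limit is the rotationally symmetrized $DP$, not the plain $DP(\alpha_{m}^{*}H_{m}^{*})$; the paper's short Scheff\'e argument sidesteps, rather than resolves, exactly this point.
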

\begin{proof}
Without loss of generality, let $N=2$. Consider any arbitrary partition $\left\{ B_{1},B_{2}\right\} $ of
the space $\mathscr{X}$. Note that 
\begin{eqnarray*}
 &  & \left(P_{k,m}^{*}(B_{1}),P_{k,m}^{*}(B_{2})\right)\\
 & \sim & Dir\left(\alpha_{m}^{*}H_{k,m}^{*}(B_{1}),\alpha_{m}^{*}H_{k,m}^{*}(B_{2}),\alpha_{m}^{*}\left(1-H_{k,m}^{*}(B_{1})-H_{k,m}^{*}(B_{2})\right)\right)
\end{eqnarray*}
Set $Y_{k,i}=P_{k,m}^{*}(B_{i})$ and $v_{k,i}=H_{k,m}^{*}(B_{i}),\,i=1,2$.
Then, the joint density function of $\boldsymbol{Y}=(Y_{k,1},Y_{k,2})$
is 
\begin{eqnarray*}
f_{Y_{k,1},Y_{k,2}}(y_{k,1},y_{k,2}) & = & \frac{\Gamma\left(\alpha_{m}^{*}\right)}{\Gamma(\alpha_{m}^{*}v_{k,1})\Gamma(\alpha_{m}^{*}v_{k,2})\Gamma(\alpha_{m}^{*}(1-v_{k,1}-v_{k,2}))}y_{k,1}^{\alpha_{m}^{*}v_{k,1}-1}y{}_{2}^{\alpha_{m}^{*}v_{k,2}-1}\\
&&\times(1-y_{k,1}-y_{k,2})^{\alpha_{m}^{*}(1-v_{k,1}-v_{k,2})-1}.
\end{eqnarray*}

By using the Scheff\'e's lemma, we only need to prove that
\begin{eqnarray*}
f_{Y_{k,1},Y_{k,2}}(y_{k,1},y_{k,2})& \rightarrow & \frac{\Gamma\left(\alpha_{m}^{*}\right)}{\Gamma(\alpha_{m}^{*}t_{1})\Gamma(\alpha_{m}^{*}t_{2})\Gamma(\alpha_{m}^{*}(1-t_{1}-t_{2}))}y_{1}^{\alpha_{m}^{*}t_{1}-1}y{}_{2}^{\alpha_{m}^{*}t_{2}-1}\\
&&\times(1-y_{1}-y_{2})^{\alpha_{m}^{*}(1-t_{1}-t_{2})-1}
\end{eqnarray*}
as $k\rightarrow \infty$, where $t_{i}=H_{m}^{*}(B_{i}),\, i=1,2$.  We need to find
$
\underset{k\rightarrow\infty}{\lim}f_{Y_{k,1},Y_{k,2}}(y_{k,1},y_{k,2}).$

Therefore, we only need to show that $v_{k,i}=H_{k,m}^{*}(B_{i}) \rightarrow H_{m}^{*}(B_{i})=t_{i}$
as $k\rightarrow\infty$. We have 
\[
\underset{k\rightarrow\infty}{\lim}H_{k,m}^{*}=\frac{\alpha H+\overset{m}{\underset{i=1}{\sum}}\underset{k\rightarrow\infty}{\lim}\overset{k}{\underset{j=1}{\sum}}\delta_{A_{\theta_{j}}\boldsymbol{X}_{i}}/k}{\alpha+m},
\]
where the limit holds with respect to the weak topology. Let $G=\left\{ A_{\theta}\mid0\leq\theta\leq2\pi\right\}$
, where $A_{\theta}=\left(\begin{array}{cc}
\cos\theta & -\sin\theta\\
\sin\theta & \cos\theta
\end{array}\right)$. Define 
\[
G_{m}=\left\{ A_{\theta}\mid\theta=\frac{2k\pi}{2^{m}},\,k=0,1,\ldots,2^{m}\right\} ,
\]
where $m\geq1$. Then, $G_{m}$ is a finite subgroup of $G$ and $\mid G_{m}\mid=2^{m}$.
Define $K=\overset{\infty}{\underset{m=1}{\cup}}G_{m}$. Notice that for any $A_{\theta}\in G$, there is a $B\in K$ such that the Euclidean distance 
$\parallel A_{\theta}-B\parallel<\varepsilon$. We have 
\[
[0,2\pi]=\overset{2^{m}-1}{\underset{k=0}{\cup}}\left[\frac{2k\pi}{2^{m}},\frac{2(k+1)\pi}{2^{m}}\right].
\]
Take $m\in\mathbb{N}$ such that $\frac{\pi}{2^{m-1}}<\frac{\varepsilon}{2}$. Hence, there exists an $r$, $(0\leq r\leq2^{m})$ such that $\theta\in\left[\frac{2r\pi}{2^{m}},\frac{2(r+1)\pi}{2^{m}}\right]$.
Now let $B=\left(\begin{array}{cc}
\cos\delta & -\sin\delta\\
\sin\delta & \cos\delta
\end{array}\right)$, where $\delta=\frac{2r\pi}{2^{m}}.$ Then, by using Euclidean norm, we have
\begin{eqnarray}
 \parallel A_{\theta}-B\parallel=\sqrt{2}\sqrt{\left(\cos\theta-\cos\frac{2r\pi}{2^{m}}\right)^{2}+\left(\sin\theta-\sin\frac{2r\pi}{2^{m}}\right)^{2}}\label{eq:metric2}
\end{eqnarray}
On the other hand, 
\begin{equation}
\mid\cos\theta-\cos\frac{2r\pi}{2^{m}}\mid\leq\mid\theta-\frac{2r\pi}{2^{m}}\mid\leq\frac{\pi}{2^{m-1}}<\frac{\varepsilon}{2}.\label{eq:met3}
\end{equation}
 Similarly, $\mid\sin\theta-\sin\frac{2r\pi}{2^{m}}\mid<\frac{\varepsilon}{2}.$
Then, by (\ref{eq:metric2}), 
\[
\parallel A_{\theta}-B\parallel<\varepsilon.
\]
Therefore, for any closed set C from strong law of large numbers we have
\[
\underset{k\rightarrow\infty}{\lim}\overset{k}{\underset{j=1}{\sum}}\delta_{A_{\theta_{j}}\boldsymbol{X}_{i}}(C)/k=E\left(\delta_{A_{\Theta}\boldsymbol{X}_{i}}(C)\right)=F_{A_{\Theta}\boldsymbol{X}_{1}}(C).
\]
Hence, 
\[
\overset{k}{\underset{j=1}{\sum}}\delta_{A_{\theta_{j}}\boldsymbol{X}_{i}}/k\stackrel{d}{\rightarrow} F_{A_{\Theta}\boldsymbol{X}_{1}},
\]
where $F_{A_{\Theta}\boldsymbol{X}_{1}}(\cdot)$ is the probability measure for the random variable $A_{\Theta}\boldsymbol{X}_{1}$.
\end{proof}
Lemma \ref{finite_dip} proves that the finite-dimensional distributions
of the process $ P_{k,m}^{*}$ converge to the corresponding finite-dimensional
distribution of  $P_{m}^{*}$. 
In the following theorem, we obtain 
the Dirichlet invariant process posterior for the infinite group, i.e., when $\mid\mathscr{G}\mid=k\rightarrow\infty$. Indeed, we prove that the Dirichlet invariant process approaches to Dirichlet process as $k\rightarrow\infty$ with respect to the weak topology.
\begin{thm}
Suppose the assumptions of Theorem \ref{thm} hold.
Consider a Dirichlet invariant process prior $P\sim DIP(\alpha H)$
on $F$ and denote the corresponding posterior Dirichlet invariant
process by $P_{k,m}^{*}\sim DIP(\alpha_{m}^{*}H_{k,m}^{*})$. Then, as
$k\rightarrow\infty$, 
\[
P_{k,m}^{*}\stackrel{d}{\rightarrow}P_{m}^{*}
\]
\end{thm}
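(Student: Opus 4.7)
The plan is to upgrade the finite-dimensional convergence furnished by Lemma \ref{finite_dip} to convergence in distribution of $P_{k,m}^{*}$ as a random element of the Polish space $\mathscr{P}(\mathscr{X})$ equipped with the weak topology. First I would recall that, as noted in the definition of the Dirichlet invariant process, the Borel $\sigma$-field $\mathscr{B}(\mathscr{P}(\mathscr{X}))$ generated by the weak topology coincides with the smallest $\sigma$-field making the coordinate evaluations $P\mapsto P(B)$, for $B\in\mathscr{B}(\mathscr{X})$, measurable. Consequently the law of a random probability measure is uniquely determined by its finite-dimensional distributions, and Lemma \ref{finite_dip} already supplies the convergence of those finite-dimensional distributions for every choice of $B_{1},\ldots,B_{N}\in\mathscr{B}(\mathscr{X})$.

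Next I would establish that the family of laws $\{\mathcal{L}(P_{k,m}^{*})\}_{k\ge 1}$ is tight on $\mathscr{P}(\mathscr{X})$. For this, observe that the posterior base measures $H_{k,m}^{*}$ are convex combinations, with $k$-independent weight $\alpha/(\alpha+m)$, of the fixed measure $H$ and an atomic measure supported on the compact union of circles $\bigcup_{i=1}^{m}\{\boldsymbol{x}\in\mathbb{R}^{2}:\parallel\boldsymbol{x}\parallel=\parallel\boldsymbol{X}_{i}\parallel\}$, so $\{H_{k,m}^{*}\}_{k}$ is a tight family of probability measures on $\mathscr{X}$. Tightness of the associated Dirichlet invariant processes then follows by a standard argument, for instance via Ferguson's gamma-process representation or the stick-breaking construction, each of which expresses $P_{k,m}^{*}$ as a random convex combination of Dirac masses drawn from $H_{k,m}^{*}$, so that mass escape of the base measure is the only source of mass escape for $P_{k,m}^{*}$ itself.

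Combining tightness with the finite-dimensional convergence of Lemma \ref{finite_dip} applied to a convergence-determining class of continuity sets, every weakly convergent subsequence of $P_{k,m}^{*}$ must have law equal to that of $P_{m}^{*}$. This forces the full sequence to converge in distribution to $P_{m}^{*}$ in $\mathscr{P}(\mathscr{X})$, which is the stated conclusion. The main obstacle is the tightness step: although the family $\{H_{k,m}^{*}\}_{k}$ is visibly tight, promoting this to tightness of the random measure $P_{k,m}^{*}$ itself is not a tautology and must be argued either through a direct check of the tightness criterion for random measures or through an explicit pathwise representation, and care is also needed to verify that the distinguished convergence-determining family of $H$-continuity sets is rich enough for the Portmanteau step.
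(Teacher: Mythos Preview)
Your proposal is correct but proceeds along a genuinely different route from the paper. Both arguments start from Lemma~\ref{finite_dip} for finite-dimensional convergence; the divergence is in how tightness is obtained. The paper views the posterior as a random distribution function on the unit square, restricts (without loss of generality) to the case where $H$ is Lebesgue measure on $[0,1]^{2}$, and verifies the Bickel--Wichura/Billingsley moment criterion (Theorem~13.5 of Billingsley) by bounding $E\bigl[P_{k,m}^{*}(C)\,P_{k,m}^{*}(D)\bigr]$ for adjacent blocks $C,D\subset[0,1]^{2}$ using the explicit second moments of the Dirichlet distribution. Your argument instead stays in $\mathscr{P}(\mathscr{X})$ with the weak topology, observes that the base measures $H_{k,m}^{*}$ form a tight family because their empirical component is supported on the fixed compact union of circles $\bigcup_{i}\{\|\boldsymbol{x}\|=\|\boldsymbol{X}_{i}\|\}$, and promotes this to tightness of the random measures $P_{k,m}^{*}$ via the relation $E[P_{k,m}^{*}(K^{c})]=H_{k,m}^{*}(K^{c})$ together with Markov's inequality and Prohorov's theorem. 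Your approach is more intrinsic---it does not require mapping the problem onto $[0,1]^{2}$ or computing Dirichlet moments---and it makes transparent that tightness of the posterior Dirichlet (invariant) processes is inherited directly from tightness of their base measures. The paper's approach, on the other hand, is entirely computational and self-contained once the moment criterion is granted. Your caveat about needing a convergence-determining class of $H_{m}^{*}$-continuity sets for the identification step is well placed and is exactly the care the argument requires.
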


\begin{proof}
The finite dimensional convergence is proved in Lemma \ref{finite_dip}. Now, let $\beta=\beta_{1}+\beta_{2}>1$, $\gamma=\gamma_{1}+\gamma_{2}>0$ and $\mu$ be a finite nonnegative measure on $T=[0,1]\times[0,1]$. Consider two neighboring blocks $C=(s,t]\times (a,b]$ and $D=(t,u]\times (a,b]$ in $T$, where $s\leq t\leq u$. By \citep{bickel1971convergence} and Theorem 13.5 of \citep{billingsley2013convergence}, 
we need to prove that 
\[
E\left[( P_{k,m}^{*}(C))^{\gamma_{1}}(P_{k,m}^{*}(D))^{\gamma_{2}}\right]\leq(\mu(C))^{\beta_{1}}(\mu(D))^{\beta_{2}},
\]

where $\beta_{1}$, $\beta_{2}$, $\gamma_{1}$ and $\gamma_{2}$ satisfy $\beta_{1}+\beta_{2}>1$ and $\gamma_{1}+\gamma_{2}>0$. Take $\beta_{1}=\beta_{2}=\gamma_{1}=\gamma_{2}=1$ and let $\mu(\cdot)=\frac{\alpha \lambda(\cdot)+m}{\alpha+m}$, where $\lambda$ is the Lebesgue measure on $T=[0,1]\times[0,1]$. Then, it is enough to show that 
\begin{eqnarray*}
& & E\left[P_{k,m}^{*}(C) P_{k,m}^{*}(D)\right]\leq\\
& &\left(\frac{\alpha}{\alpha^{*}}\right)^{2}\left((b-a)^{2}(t-s)(u-t)+\frac{m}{\alpha}(b-a)(u-s)+\left(\frac{m}{\alpha}\right)^{2}\right).
\end{eqnarray*}
Without loss of generality, we only consider the 
distribution $H$ on $T=[0,1]\times[0,1]$, where $H((u_{1},u_{2}]\times (w_{1},w_{2}])=(u_{2}-u_{1})(w_{2}-w_{1})$. Note that
\begin{eqnarray*}
 &  & \left(P_{k,m}^{*}(C),P_{k,m}^{*}(D)\right)\\
 & \sim & Dir\left(\alpha_{m}^{*}H_{k,m}^{*}(C),\alpha_{m}^{*}H_{k,m}^{*}(D),\alpha_{m}^{*}\left(1-H_{k,m}^{*}(C)-H_{k,m}^{*}(D)\right)\right)
\end{eqnarray*}
and 
\[
 E\left[ P_{k,m}^{*}(C)P_{k,m}^{*}(D)\right]=\frac{\alpha^{*}}{\alpha^{*}+1}H_{k,m}^{*}(C)H_{k,m}^{*}(D).
\]
We have 
\begin{eqnarray*}
&&H_{k,m}^{*}(C)H_{k,m}^{*}(D) \leq\\
& &\left(\frac{\alpha}{\alpha^{*}}\right)^{2}\left((b-a)^{2}(t-s)(u-t)+\frac{m}{\alpha}(b-a)(u-s)+\left(\frac{m}{\alpha}\right)^{2}\right).\\
\end{eqnarray*}
Then, 
\begin{eqnarray*}
& & E\left[ P_{k,m}^{*}(C) P_{k,m}^{*}(D)\right]\leq\\
& &\left(\frac{\alpha^{*}}{\alpha^{*}+1}\right)\left(\frac{\alpha}{\alpha^{*}}\right)^{2}\left((b-a)^{2}(t-s)(u-t)+\frac{m}{\alpha}(b-a)(u-s)+\left(\frac{m}{\alpha}\right)^{2}\right).
\end{eqnarray*}
\end{proof}
 In the following, we provide an algorithm which can be used in computational studies. Let $\boldsymbol{X}_{1},\ldots,\boldsymbol{X}_{m}$
be a sample distributed with an unknown bivariate spherically symmetric distribution $F$. Consider a Dirichlet invariant process with parameters $\alpha$ and $H$, where $H$ is a bivariate spherically symmetric distribution. The algorithm below explains the steps of generating samples from a Dirichlet invariant process posterior $DP(\alpha^{*}_{m}H^{*}_{k,m})$. Let $k$ be a large positive integer. Then,
\begin{enumerate}
\item Consider the jump points $\theta_{i}=2\pi i/k, (i=0,\ldots,k-1)$ with equal
jump sizes from uniform distribution $U[0,2\pi].$
\item For each vector  $\boldsymbol{X}_{i},\,(i=1,\ldots,m)$, compute the values
$A_{\theta_{j}}\boldsymbol{X}_{i},\,(j=1,\ldots,k)$, where $A_{\theta_{j}}=\left(\begin{array}{cc}
\cos\theta_{j} & -\sin\theta_{j}\\
\sin\theta_{j} & \cos\theta_{j}
\end{array}\right).$
\item Compute the empirical distribution of $A_{\theta_{j}}\boldsymbol{X}_{i}$'s,
where $(j=1,\ldots,k)$ and $(i=1,\ldots,m)$ by assigning the equal weight
$\frac{1}{km}$ to each vector $A_{\theta_{j}}\boldsymbol{X}_{i}$. 
\item Generate $N$ observations $\theta_{1}^{*},\ldots,\theta_{N}^{*}$
from $H_{k,m}^{*}$, where $N$ is large enough and 
\[
H_{k,m}^{*}=\frac{\alpha H+\frac{1}{k}\overset{k}{\underset{j=1}{\sum}}\overset{m}{\underset{i=1}{\sum}}\delta_{A_{\theta_{j}}\boldsymbol{X}_{i}}}{\alpha+m}.
\]
\item Generate a sample $P$ from Dirichlet invariant process
posterior with parameters $\alpha^{*}_{m}=\alpha+m$ and $H^{*}_{k,m}$.
\end{enumerate}

The step 5 of the algorithm above can be computed by the algorithm provided in \cite{zarepour2012rapid} 
 for generating samples from Dirichlet process posterior. 

\section{Symmetry in High-Dimensional Euclidean Space}

The approach of Section 3 can be generalized to higher
dimensional Euclidean space.  Consider the distribution which is symmetric about one of the axes of a coordinate system. Then the rotation is performed around a certain axis. The following
three basic orthogonal matrices $A_{\theta_{x}}$, $A_{\theta_{y}}$ and $A_{\theta_{z}}$ rotate vectors counterclockwise by angles $\theta_{x}$, $\theta_{y}$ and $\theta_{z}$
about the x, y, or z-axis, respectively, in three dimensional space.  
\[
A_{\theta_{x}}=\left[\begin{array}{ccc}
1 & 0 & 0\\
0 & \cos\theta_{x} & -\sin\theta_{x}\\
0 & \sin\theta_{x} & \cos\theta_{x}
\end{array}\right]
,
A_{\theta_{y}}=\left[\begin{array}{ccc}
\cos\theta_{y} & 0 & \sin\theta_{y}\\
0 & 1 & 0\\
-\sin\theta_{y} & 0 & \cos\theta_{y}
\end{array}\right]
\]
\[
A_{\theta_{z}}=\left[\begin{array}{ccc}
\cos\theta_{z} & -\sin\theta_{z} & 0\\
\sin\theta_{z} & \cos\theta_{z} & 0\\
0 & 0 & 1
\end{array}\right]
\]
Then, other rotation matrices can be obtained from these three using matrix multiplication. For example, the product
$$
A_{\theta_{x}}A_{\theta_{y}}A_{\theta_{z}}~~~~~~~~~~~~~~~~~~~~~~~~~~~~~~~~~~~~~~~~~~~~~~~~~~~~~~~~~~~~~~~~~~~~~~~~~~~~~~~~~~~~~~~~~~~~~~~~~~~~~~~~~$$$$ =
\small{\left[\begin{array}
{ccc}
\cos\theta_{y}\cos\theta_{z} & -\cos\theta_{y}\sin\theta_{z} & \sin\theta_{y}\\
\cos\theta_{x}\sin\theta_{z}+\sin\theta_{x}\sin\theta_{y}\cos\theta_{z} & \cos\theta_{x}\cos\theta_{z}-\sin\theta_{x}\sin\theta_{y}\sin\theta_{z} & -\sin\theta_{x}\cos\theta_{y}\\
\sin\theta_{x}\sin\theta_{z}-\cos\theta_{x}\sin\theta_{y}\cos\theta_{z} & \sin\theta_{x}\cos\theta_{z}+\cos\theta_{x}\sin\theta_{y}\sin\theta_{z} & \cos\theta_{x}\cos\theta_{y}
\end{array}\right]}
$$

represents a rotation whose yaw, pitch, and roll angles are $A_{\theta_{x}}$, $A_{\theta_{y}}$ and $A_{\theta_{z}}$, respectively. Therefore, the equation (\ref{eq:spheri_esti}) can be generalized based on the orthogonal matrix corresponding to the angles of the rotation for any multidimensional spherically symmetric distribution. Also, the algorithm in Section 3 can be generalized to the multidimensional case easily. 
 
\section{Notes and remarks}
Assuming orthogonal symmetry, the empirical part of the base distribution of the posterior will be augmented by uncountable many points. In regular symmetry for univariate case, the empirical part includes points of the form $\{X_{i}: i=1,2,\ldots,m\}\cup \{2\mu-X_{i};i=1,2,\ldots,m\}$. Therefore, the assumption of spherical symmetry imposes much greater impact in making inference compare to the regular symmetry. 
On the other hand, it seems that the general group requirement can be relaxed. For example, in the case where $X$ has an unknown distribution $F$ with $E(X)=0$, the Dirichlet invariant process posterior should be obtained using the base distribution 
\[
{H}_{m}^{*}=\frac{\alpha H+\overset{m}{\underset{i=1}{\sum}}\delta_{X_{i}-\overline{X}}}{\alpha+m}.
\]

\section{ Discussion}
In this paper, in order to make a Bayesian nonparametric inference for bivariate spherically symmetric distributions, we placed a Dirichlet invariant process prior on the space of this class of distributions and derived the corresponding posterior process. Indeed, our approach is an extension of Dirichlet invariant process to bivariate spherically symmetric distributions. We first obtained the Dirichlet invariant process posterior for a finite group of invariant transformations. Then, we considered an infinite group of transformations.
We proved that for an infinite group, the Dirichlet invariant process approaches to  a Dirichlet process.  The approach developed here can be applied for other forms of invariance. Further, the idea can be used for estimating a density function, a unimodal density function, the mode of a distribution and performing the goodness-of-fit tests. 

\section*{Acknowledgements}
This research was supported by grant funds from the Natural Science
and Engineering Research Council of Canada.

\bibliographystyle{apa}
\bibliography{BIB}

\end{document}